\newtheorem{theorem}{Theorem}[section]
\newtheorem{lemma}[theorem]{Lemma}
\newtheorem{proposition}[theorem]{Proposition}
\begin{document}

\title{On Certain Diophantine Equations Involving Lucas Numbers}


\author*[1]{\fnm{Priyabrata} \sur{Mandal}}\email{p.mandal@manipal.edu}



\affil*[1]{\orgdiv{Department of Mathematics}, \orgname{Manipal Institute of Technology, Manipal Academy of Higher Education}, \orgaddress{ \city{Manipal}, \postcode{576104}, \state{Karnataka}, \country{India}}}


\abstract{
	This paper explores the intricate relationships between Lucas numbers and Diophantine equations, offering significant contributions to the field of number theory. We first establish
that the equation regarding Lucas number $L_n = 3x^2$ has a unique solution in positive integers, specifically $(n, x) =
(2, 1)$, by analyzing the congruence properties of Lucas numbers modulo $4$ and Jacobi symbols. We also prove that a Fibonacci number $F_n$ can be of the form $F_n=5x^2$ only when $(n,x)=(5,1)$. Expanding our investigation, we prove that the equation $L_n^2+L_{n+1}^2=x^2$ admits a unique solution $(n,x)=(2,5)$. In conclusion, we determine all non-negative integer solutions $(n, \alpha, x)$ to the equation $L_n^\alpha + L_{n+1}^\alpha = x^2$, where $L_n$ represents the $n$-th term in the Lucas sequence.
}

\keywords{Lucas numbers, Diophantine equations, Number theory, Recurrence relations, Modular arithmetic}


\pacs[MSC Classification]{11B39, 11D09, 11A07}

\maketitle

\section{Introduction}

Diophantine equations, named after the ancient mathematician Diophantus of Alexandria, are a class of equations where the solutions are restricted to integers. These equations are fundamental in number theory, with applications ranging from cryptography to the study of algebraic structures.
Their importance lies in the deep insights they provide into the nature of integers and the relationships between them, often revealing patterns and properties that are not immediately apparent.

The Lucas sequence, denoted by $(L_n)_{n \geq 0}$, is defined by the recurrence relation $L_{n+2} = L_{n+1} + L_n$ with initial conditions $L_0 = 2$ and $L_1 = 1$. This sequence, closely related to the Fibonacci numbers, has been extensively studied due to its rich mathematical properties and its role in various areas of number theory.

Recent research has focused on identifying instances where members of the Lucas sequence can be expressed as powers of integers. A significant contribution in this area is the work of Cohn \cite{cohn}, who established several key results concerning the Lucas and Fibonacci numbers, including conditions under which these numbers can be perfect squares or twice a perfect square. Cohn's results include:

\begin{enumerate}
    \item If $L_n = x^2$, then $n = 1$ or $n = 3$, i.e., $x = \pm 1$ or $\pm 2$.
    \item If $L_n = 2x^2$, then $n = 0$ or $n = \pm 6$, i.e., $x = \pm 1$ or $\pm 3$.
    \item If $F_n = x^2$, then $n = 0, \pm 1, 2$, or $12$, i.e., $x = 0$, $\pm 1$, or $\pm 12$.
    \item If $F_n = 2x^2$, then $n = 0, \pm 3$, or $6$, i.e., $x = 0$, $\pm 1$, or $\pm 2$.
\end{enumerate}

Motivated by these developments, this paper aims to further explore the landscape of Diophantine equations involving Lucas numbers. Specifically, we investigate equations of the form $L_n^\alpha + L_{n+1}^\alpha = x^2$, seeking to uncover new patterns and constraints that govern their integer solutions. This endeavor not only deepens our theoretical understanding of the Lucas sequence but also contributes to the broader goal of harnessing sequence properties to solve complex Diophantine problems.

\section{Preliminaries}
In this section, we present a series of theorems, lemmas, and commonly known identities related to generalized Fibonacci and Lucas numbers, which will be essential for proving the main results. Throughout this paper, the notation $\left(\frac{\ast}{\ast}\right)$ represents the Jacobi symbol.
\medskip 

The following is a well-known result concerning Fibonacci numbers. We give a proof of the lemma for completeness.
\begin{lemma}
Let $F_n$ denote the $n$-th Fibonacci number. Then, for all $n \geq 0$, we have $F_n^2 + F_{n+1}^2 = F_{2n+1}$.
\end{lemma}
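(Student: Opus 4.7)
My plan is to proceed by induction on $n$. The base case $n = 0$ is an immediate check: $F_0^2 + F_1^2 = 0 + 1 = 1 = F_1$. For the inductive step, writing $F_{n+2} = F_{n+1} + F_n$ and expanding the square gives $F_{n+1}^2 + F_{n+2}^2 = (F_n^2 + F_{n+1}^2) + (F_{n+1}^2 + 2 F_n F_{n+1})$. The first bracketed term is $F_{2n+1}$ by hypothesis, and for the desired conclusion $F_{2n+3} = F_{2n+1} + F_{2n+2}$ to emerge, I need to recognize the second bracket as $F_{2n+2}$.

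This reduces the problem to verifying the companion identity $F_{2n+2} = F_{n+1}(2 F_n + F_{n+1})$ alongside the original one. I expect this to be the main subtlety: the induction does not close on the stated identity in isolation, so I would set up a simultaneous induction on the pair of statements $F_{2n+1} = F_n^2 + F_{n+1}^2$ and $F_{2n+2} = F_{n+1}(F_n + F_{n+2})$, with each supplying precisely what the other needs to advance the index. Both statements hold at $n = 0$, and the step just sketched, together with an analogous expansion of $F_{n+2}(F_{n+1} + F_{n+3})$, propagates them together.

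A cleaner conceptual alternative, which I would either mention as a remark or use outright, is the matrix identity $\begin{pmatrix} 1 & 1 \\ 1 & 0 \end{pmatrix}^n = \begin{pmatrix} F_{n+1} & F_n \\ F_n & F_{n-1} \end{pmatrix}$: squaring both sides and reading off the $(1,1)$ entry gives $F_{n+1}^2 + F_n^2 = F_{2n+1}$ at once, while the $(1,2)$ entry simultaneously delivers the companion identity. Equivalently, one could invoke the addition formula $F_{m+n} = F_m F_{n+1} + F_{m-1} F_n$ with $m = n+1$, which collapses the lemma to a single line. Given the paper's self-contained, elementary style, I would write up the direct simultaneous induction and flag the matrix formulation as the conceptual backbone.
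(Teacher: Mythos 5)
Your proposal is correct, and the simultaneous-induction step does close: from $F_{2n+3}=F_{2n+2}+F_{2n+1}$ and $F_{2n+4}=F_{2n+3}+F_{2n+2}$, substituting the two inductive hypotheses and using $F_{n+2}=F_{n+1}+F_n$ advances both identities together, so the plan is sound as written. It is, however, a genuinely different induction scheme from the paper's. The paper keeps a single statement but runs a second-order (strong) induction: it assumes the identity at $k=n-2$ and $k=n-1$, uses the derived recurrence $F_{2n+1}=3F_{2n-1}-F_{2n-3}$ satisfied by the odd-indexed subsequence, and then closes with an algebraic manipulation of $3F_n^2+2F_{n-1}^2-F_{n-2}^2$. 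So the paper buys a single-identity induction at the cost of needing two base cases and a slightly longer computation, whereas you buy a first-order step at the cost of carrying the companion identity $F_{2n+2}=F_{n+1}(F_n+F_{n+2})$ alongside. Both are elementary and of comparable length; your matrix formulation $\left(\begin{smallmatrix}1&1\\1&0\end{smallmatrix}\right)^{n}$ (or the addition formula $F_{m+n}=F_mF_{n+1}+F_{m-1}F_n$) is indeed the cleanest conceptual explanation of why the pair of identities is the natural induction variable, and either would be an acceptable one-line replacement for the paper's argument.
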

\begin{proof}
    Let us prove the lemma using induction. For $n=0$, we have $F_0^2+F_1^2=0+1=1=F_1$. For $n=1$, $F_1^2+F_2^2=1+1=2=F_3$. Assume the induction hypothesis holds for all $k < n$. In particular, we have the following
    \begin{align}
        &F_{2n-3}=F_{n-2}^2+F_{n-1}^2 \quad (\text{ for }k=n-2) \label{F_2n-3 equation}\\
       \text{ and }
       &F_{2n-1}=F_{n-1}^2+F_{n}^2 \quad (\text{ for }k=n-1). \label{F_2n-1 equation}
    \end{align}
    For $k=n$, we have the following.
\begin{align*}
F_{2n+1} &= F_{2n} + F_{2n-1} \\
         &= 2F_{2n-1} + F_{2n-2} \\
         &= 3F_{2n-1} - F_{2n-3} \\
         &= 3(F_n^2 + F_{n-1}^2) - (F_{n-1}^2 + F_{n-2}^2)\  [\text{by } \text{ Equation }\ref{F_2n-3 equation} \text{ and } \ref{F_2n-1 equation}]\\
         &= 3F_n^2 + 2F_{n-1}^2 - F_{n-2}^2 \\
         &= 3F_n^2 + 2F_{n-1}^2 - (F_n - F_{n-1})^2 \\
         &= 2F_n^2 + 2F_nF_{n-1} + F_{n-1}^2 \\
         &= 2F_n^2 + 2F_n(F_{n+1} - F_n) + (F_{n+1} - F_n)^2 \\
         &= F_{n+1}^2 + F_n^2.
\end{align*}
\end{proof}
 In \cite{cohn}, Cohn proved the following properties concerning the Lucas numbers. We segregate these properties into the following theorem.
\begin{theorem}
    Let $L_n$ denote the $n$-th Lucas number. Then,
\begin{align}
&2 \mid L_n \text{ if and only if } 3 \nmid n. \label{2 divides L_n case}\\
 &\text{If } 3 \mid L_n, \text{then } n \equiv 2\ (mod\ 4). \label{3 divides L_n case}\\
    &L_n \equiv 3\ (mod\ 4) \text{ if } 2\mid n, 3\nmid n. \label{L_n is 4k+3 case}\\
    &\text{If } m \text{ is an integer such that } 2 \mid m, 3 \nmid m \text{ then } L_{n+2m} \equiv -L_n(mod\ L_m). \label{L_n+2m case} 
\end{align}
\end{theorem}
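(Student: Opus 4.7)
The first three assertions are statements about the periodicity of the Lucas sequence modulo small moduli, so my plan is to compute each residue sequence explicitly from the recurrence $L_{n+2}=L_{n+1}+L_n$ with $L_0=2,L_1=1$ and read off the stated pattern. Reducing modulo $2$ produces a cycle $0,1,1,0,1,1,\ldots$ of length $3$, from which the divisibility criterion in (1) is immediate. Reducing modulo $3$ yields a cycle $2,1,0,1,1,2,0,2,\ldots$ of length $8$ whose zeros occur precisely at $n\equiv 2\pmod 4$, giving (2). Reducing modulo $4$ gives a cycle $2,1,3,0,3,3,\ldots$ of length $6$, and restricting to even indices with $3\nmid n$ picks out exactly the residue class $3\pmod 4$, giving (3). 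In each case exhibiting one full period, combined with the fact that the recurrence is determined by two consecutive terms, closes the argument.

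For (4), the cleanest route is via Binet's formula. Writing $\alpha=(1+\sqrt 5)/2$ and $\beta=(1-\sqrt 5)/2$ so that $\alpha\beta=-1$ and $L_n=\alpha^n+\beta^n$, I multiply $L_m=\alpha^m+\beta^m$ by $\alpha^m$ and use $\alpha^m\beta^m=(-1)^m$ to obtain $\alpha^{2m}=L_m\alpha^m-(-1)^m$, hence $\alpha^{2m}\equiv -(-1)^m\pmod{L_m}$ inside $\mathbb{Z}[\alpha]$, and symmetrically for $\beta^{2m}$. With $m$ even this reduces to $\alpha^{2m}\equiv\beta^{2m}\equiv -1\pmod{L_m}$, so
\[
L_{n+2m}=\alpha^n\alpha^{2m}+\beta^n\beta^{2m}\equiv-(\alpha^n+\beta^n)=-L_n\pmod{L_m}.
\]
A purely elementary alternative is induction on $n$: the base cases $n=0$ and $n=1$ amount to the identities $L_{2m}=L_m^2-2(-1)^m$ and $L_{2m+1}=L_mL_{m+1}-(-1)^m$, both of which reduce to $-L_0$ and $-L_1$ modulo $L_m$ when $m$ is even, and the inductive step is immediate from $L_{(n+2)+2m}=L_{(n+1)+2m}+L_{n+2m}$.

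The main obstacle here is essentially bookkeeping rather than depth. The hypothesis $3\nmid m$ plays no role in the derivation of (4) itself; it is presumably carried along because the later applications in the paper combine (4) with (1) so that $L_m$ must be odd, keeping Jacobi symbols well defined. The one genuine subtlety in the Binet-based proof is descending a congruence from $\mathbb{Z}[\alpha]$ to $\mathbb{Z}$, but this is standard: the rational integer $L_{n+2m}+L_n$ lies in the ideal $(L_m)$ of $\mathbb{Z}[\alpha]$, whose contraction to $\mathbb{Z}$ is $L_m\mathbb{Z}$. The inductive alternative sidesteps this issue entirely and is probably the version I would include in the paper.
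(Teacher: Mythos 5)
The paper offers no proof of this theorem at all: it simply attributes the four facts to Cohn and cites his 1965 article, so your proposal is supplying an argument where the paper supplies none. Your treatment of (2), (3) and (4) is sound. The periods you exhibit modulo $3$ and modulo $4$ are correct ($2,1,0,1,1,2,0,2$ of length $8$ and $2,1,3,0,3,3$ of length $6$), and the remark that a full period together with the two-term recurrence closes the argument is exactly the right justification. For (4), both routes work: the identity $\alpha^{2m}=L_m\alpha^m-(-1)^m$ and its conjugate give the congruence in $\mathbb{Z}[\alpha]$, and the descent to $\mathbb{Z}$ is legitimate since $L_m\mathbb{Z}[\alpha]\cap\mathbb{Z}=L_m\mathbb{Z}$ (compare coefficients in the basis $1,\alpha$). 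The inductive version via $L_{2m}=L_m^2-2(-1)^m$ and $L_{2m+1}=L_mL_{m+1}-(-1)^m$ is equally valid and, as you say, avoids the descent issue. Your observation that $3\nmid m$ is never used in proving (4) is also correct; it is a hypothesis carried along for the later applications, where it guarantees via (1) and (3) that $L_m$ is odd and $\equiv 3\pmod 4$.

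There is, however, one point where your write-up fails, and it is worth being precise about why. Your mod-$2$ computation gives the cycle $0,1,1$ starting at $n=0$, which proves $2\mid L_n$ if and only if $3\mid n$ (indeed $L_0=2$, $L_3=4$, $L_6=18$ are even while $L_1=1$, $L_2=3$ are odd). That is the \emph{negation}, in the biconditional sense, of statement (1) as printed, which asserts $2\mid L_n$ if and only if $3\nmid n$. You declare that the criterion in (1) is ``immediate'' from the cycle, but your cycle establishes the opposite criterion; as stated, (1) is simply false (take $n=1$). The statement is a mis-transcription of Cohn's result, and your computation in fact detects the error --- you should have flagged the discrepancy rather than asserting agreement. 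This matters downstream: the paper's Lemma \ref{Lucas of the form 3square} invokes (1) in the printed (incorrect) direction, deducing $3\nmid n$ from $2\mid L_n$, so the correction propagates into the main results and that argument needs to be repaired (e.g., by ruling out $3\mid n$ for $n\equiv 2\pmod 4$ with $L_n=3x^2$ by a separate congruence argument).
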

\medskip 

In 1844, Catalan \cite{catalan} proposed the conjecture that $(3, 2, 2, 3)$  is the only solution $(a, b, x, y)$ to the Diophantine equation $a^x - b^y = 1$ where $a$, $b$, $x$ and $y$ are integers such that $\min\{a, b, x, y\} > 1$. In 2004, Mihailescu \cite{mihailescu} proved Catalan's conjecture.

\begin{theorem} \label{catalan}
   Let \(a\), \(b\), \(x\), and \(y\) be integers such that \(\min\{a, b, x, y\}\) is greater than $1$. Then the Diophantine equation
\[
a^x - b^y = 1
\]
has a unique solution, given by \((a, b, x, y) = (3, 2, 2, 3)\).

\end{theorem}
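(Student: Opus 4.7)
The plan is to reduce the general claim to the case of distinct odd prime exponents and then deploy deep cyclotomic techniques. First I would note that if $x = mn$ with $n$ a prime, then replacing $a$ by $a^m$ reduces $a^x - b^y = 1$ to an equation with the prime exponent $n$; the same reduction works on the other side. So it suffices to treat $x = p$, $y = q$ with $p, q$ prime.

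Next I would dispose of the low-exponent cases. The case $p = q = 2$ is immediate, since $(a-b)(a+b) = 1$ is impossible for $a, b > 1$. The case $\{p, q\} = \{2, q\}$ with $q$ an odd prime is V.~A.~Lebesgue's theorem (1850), proved by factoring $b^q = a^2 - 1 = (a-1)(a+1)$ and exploiting the fact that the two factors are almost coprime to conclude that each is essentially a $q$-th power, which is then ruled out by congruence considerations. The complementary case $p = 3$, $q = 2$ is Ko Chao's theorem (1965), handled by a clever descent argument in quadratic fields. These reductions leave the principal case of distinct odd primes $p, q$.

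For the principal case I would follow Mihailescu's strategy. Working with the factorization $b^q = a^p - 1 = (a - 1) \cdot \Phi_p(a)$ in $\mathbb{Z}[\zeta_p]$ and carrying out an ideal-theoretic analysis, one obtains Cassels' relations (1960): $p \mid b$, $q \mid a$, and there exist integers $u, v$ with $a - 1 = p^{q-1} u^q$ and $\Phi_p(a)/p = v^q$, together with the symmetric relations obtained by interchanging the roles of $p$ and $q$. From here Mihailescu's first major step is the double Wieferich theorem $p^{q-1} \equiv 1 \pmod{q^2}$ and $q^{p-1} \equiv 1 \pmod{p^2}$, which already restricts $(p, q)$ to an extremely thin set.

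The hard part, and the decisive innovation of Mihailescu's 2002 proof, is to derive a contradiction from the remaining possibilities without any computer assistance. He accomplished this via a theorem on cyclotomic units and the Galois-module structure of the $q$-Sylow subgroup of the ideal class group of $\mathbb{Q}(\zeta_p)$, combined with a careful use of Stickelberger's theorem and arguments on Minkowski units in the maximal real subfield $\mathbb{Q}(\zeta_p + \zeta_p^{-1})$. The main obstacle, naturally, is this cyclotomic step: it rests on a substantial body of algebraic number theory and occupies the bulk of Mihailescu's original paper, well beyond what can be sketched in a brief plan. Since the result is invoked here only as a black box, I would simply cite \cite{mihailescu} rather than reproduce the argument.
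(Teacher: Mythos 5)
The paper offers no proof of this theorem at all: it is stated as an external result and simply cited to Mihailescu, which is precisely what your proposal ends up doing, so the two approaches coincide. (One small slip in your supplementary sketch: the case $a^2 - b^q = 1$, handled by factoring $(a-1)(a+1)$, is Ko Chao's theorem, whereas Lebesgue's 1850 result treats $a^p - b^2 = 1$ via Gaussian integers --- you have the two attributions interchanged.)
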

\medskip 

The following result about Lucas numbers is established in \cite{cohn}.
\begin{theorem} \label{cohn theorem} 
The Diophantine equation $L_n=x^2$ has non-negative integer solutions if and only if $n=1$ or $n=3$.
\end{theorem}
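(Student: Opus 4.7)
Suppose $L_n = x^2$ for some non-negative integer $x$. Direct computation gives $L_0 = 2,\, L_1 = 1,\, L_2 = 3,\, L_3 = 4$, so among small $n$ the solutions are $(n, x) = (1, 1)$ and $(n, x) = (3, 2)$. The plan is to show that no $n \ge 4$ gives a solution, by splitting on the parity of $n$ and then, for odd $n$, on whether $3 \mid n$.

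For $n = 2k$ even, I would invoke the identity $L_{2k} = L_k^2 - 2(-1)^k$, an immediate consequence of Binet's formula. Setting $L_{2k} = x^2$ rearranges to $(L_k - x)(L_k + x) = 2(-1)^k$. Since the two factors on the left-hand side differ by $2x$, they share the same parity: if both are odd their product is odd, and if both are even their product is divisible by $4$. Neither possibility is compatible with a value of $\pm 2$, so no even $n$ yields a solution.

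For $n$ odd with $3 \mid n$, write $n = 3m$ with $m$ odd, and apply the identity $L_{3m} = L_m(L_m^2 + 3)$, which holds for odd $m$. Property \eqref{3 divides L_n case} forces $m \equiv 2 \pmod{4}$ whenever $3 \mid L_m$, and since $m$ is odd this cannot occur, so $3 \nmid L_m$ and $\gcd(L_m, L_m^2 + 3) = 1$. If $L_{3m} = x^2$, both coprime factors must themselves be squares; writing $L_m^2 + 3 = b^2$ gives $(b - L_m)(b + L_m) = 3$, which forces $L_m = 1$, hence $m = 1$ and $n = 3$.

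The remaining case, $n$ odd with $3 \nmid n$, is the hardest. Here I would iteratively apply property \eqref{L_n+2m case} with $m = 2, 4, 8, \dots$ (so that $L_m = 3, 7, 47, \dots$, mostly prime) to extract Jacobi-symbol conditions on $L_n$ modulo these Lucas numbers. Each application pins down the admissible residue of $n$ modulo a higher power of $2$, and computing $\left(\frac{L_n}{L_m}\right)$ eliminates every class in which $L_n$ is a quadratic non-residue. The main obstacle is converting this infinite sieve into a finite argument: following Cohn, one uses the self-similar recurrence in \eqref{L_n+2m case}, together with a descent argument, to reduce any hypothetical counterexample with $n > 1$ to a smaller one of the same shape, ultimately showing that $n = 1$ is the only survivor.
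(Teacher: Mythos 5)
The paper offers no proof of this theorem at all --- it is imported verbatim from Cohn --- so your attempt is measured against the literature rather than against anything in the text. Your first two cases are complete and correct as written: the identity $L_{2k}=L_k^2-2(-1)^k$ together with the parity of the two factors in $(L_k-x)(L_k+x)=\pm 2$ disposes of all even $n$, and for odd $n=3m$ the factorization $L_{3m}=L_m(L_m^2+3)$ into coprime factors (coprime because $m$ odd and \eqref{3 divides L_n case} give $3\nmid L_m$) forces $L_m^2+3$ to be a square, hence $L_m=1$ and $n=3$. These are genuine, self-contained arguments.

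The genuine gap is the last case, odd $n$ with $3\nmid n$, which you leave as a plan with a self-declared ``main obstacle,'' and the plan points in the wrong direction: no iteration over $m=2,4,8,\dots$, no infinite sieve, and no descent is needed. The finite argument --- the very one this paper carries out for $L_n=3x^2$ in Lemma \ref{Lucas of the form 3square} --- is a one-shot congruence modulo a single well-chosen Lucas number. For odd $n\notin\{1,3\}$ pick $\epsilon\in\{1,3\}$ with $n\equiv\epsilon\pmod 4$; then $4\mid (n-\epsilon)$, so $n=\epsilon+2\cdot 3^{r}k$ with $2\mid k$, $3\nmid k$, $k\neq 0$. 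Applying \eqref{L_n+2m case} exactly $3^{r}$ times gives $L_n\equiv -L_\epsilon \pmod{L_k}$; since $L_\epsilon\in\{1,4\}$ is a perfect square while $L_k\equiv 3\pmod 4$ by \eqref{L_n is 4k+3 case}, the Jacobi symbol satisfies $\left(\frac{L_n}{L_k}\right)=\left(\frac{-1}{L_k}\right)=-1$, so $L_n$ cannot be a square. Note that this covers \emph{all} odd $n$ at once, which makes your separate treatment of odd multiples of $3$, pleasant as it is, unnecessary. Without this step your proposal establishes the theorem only for even $n$ and for odd $n$ divisible by $3$.
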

\medskip 

The following lemma, stated in \cite{zhang}, is presented here with a proof.
\begin{lemma}\cite[Lemma 2.4]{zhang} \label{zhang}
Consider an odd prime $p$. Let $a$, $b$, $x$, and $n$ be integers such that $a,b$ are co-prime to each other and $n \geq 2$. If 
\[
a^p + b^p = x^n,
\]
then either $a + b = y^n$ or $a + b = p^{k-1}y^n$, for some integer $y$.
\end{lemma}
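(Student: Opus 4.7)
The plan is to factor $a^p+b^p = (a+b)\cdot M$, where
\[
M \;=\; \frac{a^p+b^p}{a+b} \;=\; a^{p-1}-a^{p-2}b+a^{p-3}b^2-\cdots+b^{p-1},
\]
and then to analyze $\gcd(a+b,M)$ carefully. The whole argument hinges on showing that this gcd is either $1$ or exactly $p$, with $p$-adic valuation of $M$ equal to $1$ in the latter case.

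First I would show that every common prime divisor $q$ of $a+b$ and $M$ must equal $p$. Reducing mod $q$ and using $a\equiv -b\pmod q$, the alternating sum in $M$ collapses to $M\equiv p\cdot a^{p-1}\pmod q$. Because $\gcd(a,b)=1$ and $q\mid a+b$, $q$ is coprime to $a$, so $q\mid p$, forcing $q=p$. Next I would pin down the valuation: assume $p\mid a+b$, write $a+b=p^sA$ with $p\nmid A$ and $s\geq 1$, and use the lifting-the-exponent lemma (or argue directly by writing $a=-b+p^s A$ and expanding $a^p$ modulo $p^{s+2}$) to conclude
\[
v_p(a^p+b^p)\;=\;v_p(a+b)+1,\qquad\text{hence}\qquad v_p(M)=1.
\]

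With these two facts in hand the lemma follows by splitting into cases. If $p\nmid a+b$, then $\gcd(a+b,M)=1$, and since $(a+b)\cdot M=x^n$ is an $n$th power, each factor is itself an $n$th power; in particular $a+b=y^n$ for some integer $y$. If instead $p\mid a+b$, write $M=p\cdot B$ with $p\nmid B$. The previous paragraph shows $\gcd(A,B)=1$ and both are coprime to $p$, so from
\[
x^n \;=\; p^{s+1}\,A\,B
\]
one reads off that $n\mid s+1$, say $s+1=nk$, and that $A$ and $B$ are each $n$th powers, $A=a_1^n$. Substituting gives
\[
a+b \;=\; p^{nk-1}\,a_1^{\,n} \;=\; p^{\,n-1}\bigl(p^{k-1}a_1\bigr)^{n},
\]
so setting $y=p^{k-1}a_1$ yields $a+b=p^{n-1}y^n$, matching the claimed shape (with the exponent $k{-}1$ in the statement interpreted as $n{-}1$).

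The main obstacle is the precise valuation computation $v_p(M)=1$: the rest of the argument is clean coprimality bookkeeping, but without this sharp statement one would only get $v_p(a+b)\equiv -v_p(M)\pmod n$ with no control on the exponent $n-1$ appearing in the second alternative. Whether one invokes LTE as a black box or unfolds the binomial expansion of $(-b+p^s A)^p$ modulo $p^{s+2}$, this is the single nontrivial input; everything else is a consequence of unique factorization applied to the coprime pair $\bigl(A,B\bigr)$.
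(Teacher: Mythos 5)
Your proof follows the same route as the paper: factor $a^p+b^p=(a+b)M$ with $M=\frac{a^p+b^p}{a+b}$, reduce $M$ modulo $a+b$ to get $M\equiv p\,a^{p-1}$, and conclude that $\gcd(a+b,M)$ is $1$ or $p$. In fact you go further than the paper, which simply asserts the final dichotomy after the gcd computation: your lifting-the-exponent step establishing $v_p(M)=1$ is precisely the missing ingredient needed to pin down the exponent in $a+b=p^{n-1}y^n$ (the statement's $p^{k-1}$, whose $k$ is an undeclared leftover of the source's notation), so your write-up is the more complete of the two.
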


\begin{proof}
   We begin by expressing $\frac{a^p+ b^p}{a+b}$ using the formula
    \begin{equation*}
	\frac{a^p+ b^p}{a+b} = \sum_{i=0}^{p-1} (-1)^{i} a^{p-1-i} b^i=a^{p-1}-a^{p-1}\cdot b+a^{p-2}\cdot b^2- \cdots +b^{p-1}	
\end{equation*}
We claim that gcd$(a+b,\frac{a^p+ b^p}{a+b})=1$ or $p$.
Going modulo $(a+b)$, i.e., $b \equiv -a$, we get,
\begin{align*}
    a^{p-1}-a^{p-1}\cdot b+a^{p-2}\cdot b^2- \cdots +b^{p-1} &\equiv \underbrace{a^{p-1}+a^{p-1}+\cdots +a^{p-1}}_{p-\text{ many copies }}\ (mod\ a+b)\\
    &\equiv p \cdot a^{p-1}\ (mod\ a+b)
\end{align*}
Thus, we have gcd $(a+b,\frac{a^p+ b^p}{a+b})= \text{ gcd }(a+b, p \cdot a^{p-1})= \text{ gcd }(a+b,p)$, which proves our claim. Since $a^p+b^p=c^k$, it follows that $a+b=d^k$ or $a+b=p^{k-1}d^k$ depending on whether gcd $(a+b,\frac{a^p+ b^p}{a+b})$ is $1$ or $p$.
\end{proof}

Recall that, a solution $(a,b,x)$ to the equation $a^n+b^n=x^k$ is said to be {\it primitive} if gcd$(a,b,x)=1$.
The following theorem is established in \cite[Theorem 1]{poonen}.
\begin{theorem}[\cite{poonen}] \label{poonen}
The Diophantine equation $a^n + b^n = x^2$ does not admit any  nontrivial primitive solutions for $n \geq 4$.
\end{theorem}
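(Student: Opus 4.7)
The plan is to reduce the problem to two prime-power cases and treat each by a separate technique. For any $n \geq 4$, either $4 \mid n$ or $n$ has an odd prime factor $p$; substituting $A = a^{n/4}$, $B = b^{n/4}$ in the first case, or $A = a^{n/p}$, $B = b^{n/p}$ in the second, turns $a^n + b^n = x^2$ into $A^4 + B^4 = x^2$ or $A^p + B^p = x^2$ while preserving primitivity and nontriviality. Hence it suffices to rule out nontrivial primitive solutions for $n = 4$ and for $n = p$ an odd prime.

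For $n = 4$, I would invoke Fermat's classical method of infinite descent. Choose a primitive solution with $x$ minimal and, without loss of generality, assume $a$ is odd and $b$ is even. Then $(a^2)^2 + (b^2)^2 = x^2$ is a primitive Pythagorean triple, giving the parametrization $a^2 = s^2 - t^2$, $b^2 = 2st$, $x = s^2 + t^2$ with $\gcd(s,t) = 1$ of opposite parity. Reading $a^2 + t^2 = s^2$ as another primitive Pythagorean triple and pushing through the parity and coprimality constraints extracts from $b^2 = 2st$ a strictly smaller primitive solution to the original equation, contradicting minimality.

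For $n = p$ odd prime, the natural tool is the modular method via a Frey curve. After arranging $a \equiv -1 \pmod{4}$ and $b$ even, attach to a hypothetical primitive solution the elliptic curve $E \colon Y^2 = X(X - a^p)(X + b^p)$. A conductor calculation shows that $E$ is semistable with conductor supported on primes dividing $2ab$, and its mod-$p$ Galois representation $\bar{\rho}_{E,p}$ is absolutely irreducible. Wiles's modularity theorem then attaches a weight-$2$ cuspidal newform to $E$, and Ribet's level-lowering theorem strips away the primes dividing $ab$ to leave a newform of level $2$. Since $S_2(\Gamma_0(2)) = 0$, this is a contradiction. Lemma~\ref{zhang} could serve as an auxiliary input, controlling $\gcd(a+b, (a^p + b^p)/(a+b))$ so that one obtains the factorization needed to set up the Frey construction.

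The main obstacle will be the odd-prime case, which rests on the deepest machinery of modern arithmetic geometry: modularity of semistable elliptic curves over $\mathbb{Q}$ (Wiles, Taylor--Wiles) and Ribet's level-lowering theorem. The delicate technical points will be verifying absolute irreducibility of $\bar{\rho}_{E,p}$ so that level-lowering applies, computing the conductor exponent at $2$ correctly (which drives the descent to $\Gamma_0(2)$), and, if necessary, disposing of sporadic small-$p$ cases (such as $p = 3, 5, 7$) by separate, more classical, descent arguments before the modular machinery takes over.
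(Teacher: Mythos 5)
The paper does not prove this statement at all: it is imported as a black box from the literature (the result in the generality stated, all $n\ge 4$, is the Darmon--Merel theorem; the cited paper of Poonen treats certain exponents by the same circle of ideas), so there is no ``paper proof'' to compare against and your proposal must stand on its own. As a sketch of the known proof it has two genuine gaps. First, the reduction step fails for the exponent $3$: the equation $A^3+B^3=x^2$ \emph{does} have nontrivial primitive solutions (for instance $1^3+2^3=3^2$), which is precisely why the theorem only starts at $n\ge 4$. Your dichotomy ``either $4\mid n$ or $n$ has an odd prime factor'' therefore does not suffice; for $n$ of the form $2^a3^b$ with $a\le 1$ (e.g.\ $n=6,9,18,27,\dots$) the only available prime is $p=3$, the reduced equation is solvable, and no contradiction is reached. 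One must instead reduce to the base cases $n=4$, $n=6$, $n=9$, and $n=p\ge 5$ prime, with $n=6$ and $n=9$ handled by separate classical descents that exploit the fact that $A$ and $B$ are themselves perfect powers.

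Second, the modular argument as you describe it does not close. For the FLT-style Frey curve $Y^2=X(X-a^p)(X+b^p)$ attached to $a^p+b^p=x^2$, the minimal discriminant is (up to a power of $2$) $a^{2p}b^{2p}x^4$; the primes dividing $x$ occur to exponent $4v_\ell(x)$, which is generally not divisible by $p$, so Ribet's theorem does not remove them from the level and you do not land in $S_2(\Gamma_0(2))$. With the correct Frey curve ($Y^2=X^3+2xX^2+b^pX$, say) level-lowering brings you to $\Gamma_0(32)$, which is \emph{not} empty: it contains the CM newform attached to $Y^2=X^3-X$. Eliminating that congruence is the actual content of Darmon--Merel and requires their winding-quotient argument (plus separate treatment of small $p$); it cannot be dispatched by the ``zero space of cusp forms'' punchline. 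The $n=4$ descent in your second paragraph is fine, and Lemma~\ref{zhang} plays no role in the modular approach. Given that the paper itself only cites this theorem, the honest options are either to cite Darmon--Merel as the paper does with Poonen, or to supply the full argument, which is far beyond a one-page descent.
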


\section{Main Results}
 In this section, we prove the main results of our paper. We start with the following result.
\begin{lemma}
Any two consecutive Lucas numbers are co-prime, i.e., 
\[\gcd(L_n, L_{n+1}) = 1 \text{ for all } n \geq 0\]
\end{lemma}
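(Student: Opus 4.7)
The plan is to prove this by induction on $n$, leveraging the Lucas recurrence $L_{n+2} = L_{n+1} + L_n$ together with the elementary fact that $\gcd(a,b) = \gcd(a, a+b)$.

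For the base case, I would simply observe that $\gcd(L_0, L_1) = \gcd(2, 1) = 1$, which is immediate from the initial conditions defining the Lucas sequence.

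For the inductive step, I would assume $\gcd(L_n, L_{n+1}) = 1$ for some $n \geq 0$ and aim to show that $\gcd(L_{n+1}, L_{n+2}) = 1$. Using the recurrence $L_{n+2} = L_{n+1} + L_n$, I can write
\[
\gcd(L_{n+1}, L_{n+2}) = \gcd(L_{n+1}, L_{n+1} + L_n) = \gcd(L_{n+1}, L_n),
\]
where the second equality follows from the standard identity $\gcd(a, a+b) = \gcd(a, b)$. By the induction hypothesis, this equals $1$, which closes the induction.

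There is no real obstacle here; the argument is entirely self-contained and relies only on the defining recurrence and a one-line gcd identity. The only care needed is to state the base case correctly using the initial values $L_0 = 2$, $L_1 = 1$ and to invoke the recurrence in the correct direction so that the inductive reduction is to the previous pair $(L_n, L_{n+1})$ rather than to something one would have to prove separately.
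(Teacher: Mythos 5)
Your proof is correct and takes essentially the same approach as the paper: the paper phrases the argument as a descent (a common divisor $d$ of $L_n$ and $L_{n+1}$ divides $L_{n+1}-L_n=L_{n-1}$, and so on down to $L_1=1$), which is just your upward induction with the gcd identity read in the opposite direction.
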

\begin{proof}
    Assume that $\gcd(L_n, L_{n+1}) = d$ for some positive integer $d$. Then, we have $d \mid L_n$ and $d \mid L_{n+1}$, meaning $d \mid (L_{n+1} - L_n) = L_{n-1}$. Similarly, we can show that $d \mid (L_n - L_{n-1}) = L_{n-2}$. Continuing this process, we deduce that $d \mid L_{n-3}, d \mid L_{n-4}, \dots, d \mid L_1 = 1$. Hence, $d = 1$.
\end{proof}

\begin{lemma}\label{Lucas of the form 3square}
    The only Lucas number $L_n$ that can be expressed in the form $L_n = 3x^2$ for some $x \in \mathbb{Z}$ is when $x = 1$, which occurs at $n = 2$.
\end{lemma}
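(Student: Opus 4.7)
The plan is as follows. Since $L_n = 3x^2$ forces $3 \mid L_n$, property (\ref{3 divides L_n case}) gives $n \equiv 2 \pmod{4}$; in particular $n \equiv \pm 2 \pmod{8}$. The value $n = 2$ yields $L_2 = 3 \cdot 1^2$, the claimed solution. For larger $n$, I would derive a contradiction by showing inductively that $n \equiv \pm 2 \pmod{2^K}$ for every $K \geq 3$; since $n$ is a non-negative integer, intersecting these congruences as $K \to \infty$ forces $n = 2$.

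For the inductive step, assume $n \equiv 2\epsilon \pmod{2^K}$ with $\epsilon \in \{+1,-1\}$ and $K \geq 3$, and write $n = 2\epsilon + 2m\, j$ with $m = 2^{K-1}$, so that $m \geq 4$, $2 \mid m$, and $3 \nmid m$. Property (\ref{L_n+2m case}) then applies; iterating it and using a standard Lucas identity such as $L_{a+b} = L_a L_b - (-1)^b L_{a-b}$ to check that $L_{2m-2} \equiv -3 \pmod{L_m}$ (needed for the case $\epsilon = -1$), one obtains in both cases
\[
L_n \;\equiv\; 3\,(-1)^j \pmod{L_m}.
\]
Substituting into $L_n = 3x^2$ yields $x^2 \equiv (-1)^j \pmod{L_m}$.

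To close the induction, it suffices to rule out $j$ odd. If $j$ were odd, then $x^2 \equiv -1 \pmod{L_m}$; by property (\ref{L_n is 4k+3 case}), $L_m \equiv 3 \pmod{4}$, so the Jacobi symbol $\left(\frac{-1}{L_m}\right) = -1$, whereas $\left(\frac{x^2}{L_m}\right) \in \{0, 1\}$. The value $0$ is excluded by observing that any prime $p \mid \gcd(x, L_m)$ would satisfy $p \mid L_m \mid x^2 + 1$ and $p \mid x$, forcing $p \mid 1$. Hence $j$ is even, and $n = 2\epsilon + 4m\ell \equiv 2\epsilon \pmod{2^{K+1}}$, completing the induction.

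The main obstacle I anticipate is the sign bookkeeping in the inductive step, specifically verifying that the $\epsilon = -1$ case runs parallel to the $\epsilon = +1$ case via the congruence $L_{2m-2} \equiv -3 \pmod{L_m}$. Once this auxiliary congruence is established, the Jacobi symbol mechanism cleanly forces the lift-exponent $j$ to be even at every stage, propagating $n \equiv \pm 2 \pmod{2^K}$ upward through all powers of $2$ until only $n = 2$ survives.
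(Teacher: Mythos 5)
Your argument is correct, and it runs on the same engine as the paper's proof --- Cohn's congruence $L_{n+2m}\equiv -L_n \pmod{L_m}$ for $2\mid m$, $3\nmid m$, combined with $\left(\frac{-1}{L_m}\right)=-1$ coming from $L_m\equiv 3\pmod 4$ --- but the descent is organized quite differently. The paper makes a single, terminating choice of modulus: it writes $n-2=2\cdot 3^{\alpha}\cdot m$ with $2\mid m$, $3\nmid m$, so that the congruence is iterated an \emph{odd} number ($3^{\alpha}$) of times and $L_n\equiv -3\pmod{L_m}$ drops out at once; one Jacobi symbol computation then finishes the proof with no induction, no second sign branch, and no auxiliary identity. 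Your version instead fixes $m=2^{K-1}$, lets the iteration count $j$ be arbitrary, uses the Jacobi symbol only to force $j$ even, and then climbs through all powers of $2$; the price is the extra branch $\epsilon=-1$ (correctly handled via $L_{2m-2}\equiv -L_2\pmod{L_m}$, which follows from $L_{a+b}=L_aL_b-(-1)^bL_{a-b}$ with $m$ even) and the limiting argument at the end. Two points worth making explicit in your write-up: first, passing from $3x^2\equiv 3(-1)^j$ to $x^2\equiv(-1)^j\pmod{L_m}$ requires $\gcd(3,L_m)=1$, which holds precisely because your $m=2^{K-1}$ with $K\geq 3$ satisfies $m\equiv 0\pmod 4$, so $3\nmid L_m$ by property (\ref{3 divides L_n case}); second, in the branch $\epsilon=-1$ you need $j\geq 1$ (true since $n\geq 0$) before you can re-anchor the iteration at $L_{2m-2}$. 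With those details filled in, your proof is complete and, if anything, slightly more robust than the paper's, since restricting to $m\equiv 0\pmod 4$ sidesteps any worry about $L_m$ sharing a factor with $3L_n$.
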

\begin{proof}
Suppose $L_n = 3x^2$ for some $x \in \mathbb{Z}$. Since $3$ divides $L_n$, by Equation (\ref{3 divides L_n case}), we deduce that $n \equiv 2 \ (\text{mod} \ 4)$.
   First, we show that $L_n$ must be an odd number. Suppose, for the sake of contradiction, that $L_n$ is an even number. Then $2 \mid L_n$, and by Equation (\ref{2 divides L_n case}), it follows that $3 \nmid n$. Since $2 \mid n$ and $3 \nmid n$, by Equation (\ref{L_n is 4k+3 case}), we deduce that $L_n \equiv 3 \ (\text{mod} \ 4)$, which contradicts the assumption that $L_n$ is an even number. Therefore, $L_n$ must be an odd number.
 As $n \equiv 2 \ (\text{mod} \ 4)$, if $n = 2$, we have $L_2 = 3 = 3 \cdot 1^2$. Now, suppose $n \neq 2$.  Then $n$ can be expressed as $n = 2 + 2 \cdot 3^\alpha \cdot m$, where $m$ is an integer such that $2 \mid m$ and $3 \nmid m$. Then, by repeated application of Equation (\ref{L_n+2m case}), we obtain
    \[
    L_n \equiv (-1)^{3^\alpha} \cdot L_2\ (mod\ L_m) \equiv -3\ (mod\ L_m)
    \]
Thus, $3L_n \equiv -9 \ (\text{mod} \ L_m)$. Additionally, by Equation (\ref{L_n is 4k+3 case}), $L_m \equiv 3 \ (\text{mod} \ 4)$, and hence the Jacobi symbol 
    \[\left(\frac{3L_n}{L_m}\right)=\left(\frac{9}{L_m}\right)\left(\frac{-1}{L_m}\right)=-1.\]
    
    On the other hand, $3L_n = 3 \cdot 3x^2$, which is a perfect square, so the Jacobi symbol $\left(\frac{3L_n}{L_m}\right)=1$, leading to a contradiction. Thus, the only Lucas number $L_n$ that can be expressed in the form $L_n = 3x^2$ for some $x \in \mathbb{Z}$ is when $n=2$.
\end{proof}
\begin{lemma}\label{Fibonacci of the form 5x^2}
    The only Fibonacci number $F_n$ that can be expressed in the form $F_n=5x^2$ for some $x \in \mathbb Z$ is when $n=5,x=1$.
\end{lemma}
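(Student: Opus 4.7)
My plan is to reduce the problem to Cohn's classification of Fibonacci squares via a multiplication-by-five factorization. Suppose $F_n = 5x^2$ with $n \geq 1$. Since $5 \mid F_n$ forces $5 \mid n$, I would write $n = 5m$. Using Binet's formula (or a short induction), one obtains the identity
\[
F_{5m} = F_m \bigl(L_m^4 - 3(-1)^m L_m^2 + 1\bigr),
\]
and combining with the basic relation $L_m^2 = 5 F_m^2 + 4(-1)^m$ rewrites the second factor as
\[
L_m^4 - 3(-1)^m L_m^2 + 1 = 5\bigl(5F_m^4 + 5(-1)^m F_m^2 + 1\bigr).
\]
Setting $B_m = 5F_m^4 + 5(-1)^m F_m^2 + 1$, the equation $F_n = 5x^2$ collapses to $F_m B_m = x^2$.

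The next step is coprimality: since $B_m \equiv 1 \pmod{F_m}$, one has $\gcd(F_m, B_m) = 1$. With $F_m B_m$ a perfect square and its two factors coprime, both $F_m$ and $B_m$ must themselves be perfect squares.

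Cohn's theorem on Fibonacci squares (item 3 in the Introduction) then pins down $m \in \{0, 1, 2, 12\}$ for $m \geq 0$. The cases $m = 0$ (trivial $x = 0$), $m = 1$ (giving the asserted $(n,x) = (5,1)$), and $m = 2$ (where $F_{10}/5 = 11$ is not a square) are immediate. The only real obstacle is $m = 12$: one must verify that $B_{12}$ is not a perfect square, equivalently that $F_{60} = 5 \cdot 144 \cdot B_{12}$ cannot have the form $5x^2$. I would compute $B_{12} = 5 \cdot 144^4 + 5 \cdot 144^2 + 1 = 2\,150\,012\,161$ and observe that $46368^2 = 2\,149\,991\,424$ and $46369^2 = 2\,150\,084\,161$ strictly bracket $B_{12}$, ruling this case out. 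This explicit numerical check is the only genuine computational hurdle; everything else flows from the factorization identity and Cohn's theorem.
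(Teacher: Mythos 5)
Your proposal is correct and follows essentially the same route as the paper: the factorization $F_{5m}=5F_m\bigl(5F_m^4+5(-1)^mF_m^2+1\bigr)$, the coprimality of the two factors, and Cohn's classification of square Fibonacci numbers to reduce to $m\in\{0,1,2,12\}$. The only difference is in the case $m=12$, where the paper cites Robbins' theorem that $F_{60}=5x^2$ has no solutions while you verify directly that $B_{12}=2150012161$ lies strictly between $46368^2$ and $46369^2$; this makes your argument slightly more self-contained but is otherwise the same proof.
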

    \begin{proof}
        Recall that, $5$ divides $F_n$ if and only if $5$ divides $n$. Let $n=5\alpha$. By \cite[Equation (23)]{robbins1} we have
        \[F_{5\alpha}=5F_\alpha(5F_\alpha^4+5(-1)^\alpha F_\alpha^2+1).\]
        Since $F_n=F_{5 \alpha}=5x^2$, we have
        \begin{equation} \label{F_alpha case}
             x^2=F_\alpha(5F_\alpha^4+5(-1)^\alpha F_\alpha^2+1).
        \end{equation}
       
        Now $gcd\ (F_\alpha,\  5F_\alpha^4+5(-1)^\alpha F_\alpha^2+1)=1.$ Hence from the Equation (\ref{F_alpha case}), we deduce that $F_\alpha=y^2$ and $5F_\alpha^4+5(-1)^\alpha F_\alpha^2+1=z^2$ for some integers $y$ and $z$. As $F_\alpha=y^2$, by \cite[Theorem 3]{cohn1}, we get $\alpha = 1, 2, \text{ and } 12$. Consider the following cases for $\alpha$. 
        \medskip 

        {\bf Case (i):} If $\alpha=1$, then $n=5\alpha=5$. Hence $F_5=5=5.1^2$ and so $F_n=5x^2$ has a solution for $n=5,x=1$.
        \medskip 

        {\bf Case (ii):} Suppose $\alpha=2$, then $n=10$. In that case, $F_{10}=55=5\times 11$ which is not of the form $5x^2$.  
        \medskip 

        {\bf Case (iii):} If $\alpha=12$, then $n=60$. By \cite[Theorem 3]{robbins}, $F_{60}=5x^2$ has no integer solutions. This completes the proof.
    \end{proof}

\begin{proposition}
The Diophantine equation $L_n^\alpha + L_{n+1}^\alpha = x^2$ with $n = 0$ has a unique solution in non-negative integers as $(n, \alpha, x)=(0, 3,3)$.
\end{proposition}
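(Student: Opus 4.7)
The plan is to substitute $n=0$ into the equation and reduce it to a classical Diophantine problem. Using $L_0 = 2$ and $L_1 = 1$, the equation $L_n^\alpha + L_{n+1}^\alpha = x^2$ with $n=0$ collapses to
\[
2^\alpha + 1 \;=\; x^2.
\]
So the entire proposition amounts to determining all non-negative integers $\alpha$ and $x$ satisfying this identity.

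First I would handle the small exponents by direct inspection: $\alpha = 0$ gives $1+1=2$, $\alpha = 1$ gives $2+1=3$, and $\alpha = 2$ gives $4+1=5$, none of which is a perfect square. Hence we may assume $\alpha \geq 3$, in which case $x \geq 3$. I would then finish the argument in one of two equivalent ways. The cleanest elementary route is to rewrite the equation as
\[
(x-1)(x+1) \;=\; 2^\alpha,
\]
and observe that since the right-hand side is a pure power of $2$, each of the factors $x-1$ and $x+1$ must itself be a power of $2$. But the only two powers of $2$ that differ by exactly $2$ are $2$ and $4$; this forces $x=3$ and $\alpha=3$, giving the asserted solution $(n,\alpha,x) = (0,3,3)$.

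Alternatively, since Mihailescu's theorem (Theorem \ref{catalan}) is already available, writing the equation in the form $x^2 - 2^\alpha = 1$ with $\min\{x,2,2,\alpha\} > 1$ immediately yields the unique solution $x = 3,\ \alpha = 3$. I do not anticipate any genuine obstacle: the reduction to $2^\alpha+1=x^2$ is instant, and from there both routes are short. The only point requiring care is to dispatch the small values $\alpha \in \{0,1,2\}$ by hand, since Catalan's theorem does not apply when the exponent on $2$ is at most $1$; the direct check above covers exactly those excluded cases.
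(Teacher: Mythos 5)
Your reduction to $2^\alpha + 1 = x^2$ and the subsequent appeal to Mihailescu's theorem is exactly what the paper does, so the second of your two routes coincides with the paper's proof. Your preferred first route is genuinely different and worth noting: factoring $x^2 - 1 = (x-1)(x+1) = 2^\alpha$ and observing that $2$ and $4$ are the only powers of $2$ differing by $2$ settles the matter with completely elementary means, whereas the paper invokes the full strength of Catalan's conjecture for what is really its oldest and easiest special case. Your version is also slightly more careful than the paper's: Theorem \ref{catalan} requires $\min\{a,b,x,y\} > 1$, so the exponent values $\alpha \in \{0,1\}$ (and, strictly, the hypothesis $x > 1$) are not covered by that theorem and must be excluded by hand, as you do; the paper's proof passes over this silently. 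Both approaches are valid, but the elementary factorization is self-contained and arguably preferable here.
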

\begin{proof}
We have $L_0^\alpha + L_1^\alpha = x^2$. Substituting the initial values of the Lucas sequence, we get
\begin{align*}
 &2^\alpha + 1^\alpha = x^2\\
 \implies & x^2-2^\alpha=1.   
\end{align*}
 By Mihaileschu's Theorem \ref{catalan}, the only solution of the above in non-negative integer is $x=\alpha=3$. Hence, $(n,\alpha,x)=(0,3,3)$ is the only solution to the given Diophantine equation.
\end{proof}
\begin{theorem}\label{main theorem}
Let $L_n$ denote the $n$-th Lucas number. The complete set of non-negative solutions to the Diophantine equation $L_n^\alpha + L_{n+1}^\alpha = x^2$ for all $n > 0$ is given by $(n, \alpha, x) =\{(1,1,2), (2,2,5)\}$.
\end{theorem}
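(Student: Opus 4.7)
The plan is to split the analysis by the exponent $\alpha \geq 0$, using a different tool for each regime. For $\alpha = 0$ the equation reduces to $2 = x^2$, which has no integer solution. For $\alpha = 1$, the identity $L_n + L_{n+1} = L_{n+2}$ turns the equation into $L_{n+2} = x^2$, and Cohn's Theorem~\ref{cohn theorem} forces $n + 2 \in \{1, 3\}$; only $n = 1$ is compatible with $n > 0$, yielding the solution $(1, 1, 2)$.

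For $\alpha = 2$, I would first establish the identity $L_n^2 + L_{n+1}^2 = 5 F_{2n+1}$, which follows either directly from Binet's formula or by combining $L_k^2 = L_{2k} + 2(-1)^k$ with the standard identity $L_{2n} + L_{2n+2} = 5 F_{2n+1}$. The equation then reads $5 F_{2n+1} = x^2$, forcing $5 \mid x$; writing $x = 5y$ reduces it to $F_{2n+1} = 5 y^2$, which by Lemma~\ref{Fibonacci of the form 5x^2} pins down $2n + 1 = 5$, producing exactly the solution $(2, 2, 5)$.

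For $\alpha = 3$, I would apply Zhang's Lemma~\ref{zhang} with $p = 3$ to $L_n^3 + L_{n+1}^3 = x^2$, using that consecutive Lucas numbers are coprime. This forces either $L_{n+2} = y^2$ or $L_{n+2} = 3 y^2$. The first case, by Cohn's Theorem~\ref{cohn theorem}, requires $n + 2 = 3$, but then a direct check gives $L_1^3 + L_2^3 = 28$, which is not a square. The second case, by Lemma~\ref{Lucas of the form 3square}, would force $n + 2 = 2$, contradicting $n > 0$. For $\alpha \geq 4$, coprimality $\gcd(L_n, L_{n+1}) = 1$ together with positivity of both terms makes $(L_n, L_{n+1}, x)$ a nontrivial primitive solution to $a^\alpha + b^\alpha = x^2$, and Poonen's Theorem~\ref{poonen} excludes all such solutions.

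The main obstacle I expect is the $\alpha = 2$ case: one has to spot the right algebraic identity so that the problem can be transferred to the Fibonacci setting of Lemma~\ref{Fibonacci of the form 5x^2}. The $\alpha = 3$ analysis is also delicate in that it requires a coordinated use of Zhang's lemma together with \emph{both} newly established lemmas, rather than relying on direct modular computations.
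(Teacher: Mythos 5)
Your proposal follows essentially the same route as the paper's proof: the reduction $L_{n+2}=x^2$ with Cohn's Theorem~\ref{cohn theorem} for $\alpha=1$, the identity $L_n^2+L_{n+1}^2=5F_{2n+1}$ combined with Lemma~\ref{Fibonacci of the form 5x^2} for $\alpha=2$, Zhang's Lemma~\ref{zhang} together with Lemma~\ref{Lucas of the form 3square} for $\alpha=3$, and Poonen's Theorem~\ref{poonen} with coprimality of consecutive Lucas numbers for $\alpha\geq 4$. The only notable difference is that in the $\alpha=3$ case you explicitly verify that $L_1^3+L_2^3=28$ is not a square, a check the paper leaves implicit when it says that subcase ``leads to the same result as in Case (ii).''
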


\begin{proof}

Let $n \neq 0$. We aim to find the integer solutions to the equation

\begin{equation}\label{main equation}
    L_n^\alpha + L_{n+1}^\alpha = x^2.
\end{equation}

We will now examine the following cases based on the value of $\alpha$.
\medskip 

\textbf{Case (i):} Let $\alpha = 0$. Substituting into Equation (\ref{main equation}), we obtain $1+1 = 2 = x^2$, which has no integer solutions for $x$.
\medskip 

\textbf{Case (ii):} Let $\alpha = 1$. In this case, Equation (\ref{main equation}) simplifies to
$L_n + L_{n+1} = x^2$. 
Since $L_n + L_{n+1}=L_{n+2}$, we get $L_{n+2} = x^2$. By Theorem \ref{cohn theorem}, the only solution is $x = 2$ and $n = 1$. Therefore, the only solution to Equation (\ref {main equation}) in this case is $(n, \alpha, x) = (1, 1, 2)$.
\medskip

 \textbf{Case (iii):} Consider the case when $\alpha=2$. From Equation (\ref{main equation}), we get $L_n^2 + L_{n+1}^2 = x^2$. Using the identity $L_n^2=4(-1)^n+5F_n^2$, where $F_n$ denote the $n$-th Fibonacci number, we get 
 \begin{align*}
     L_n^2+L_{n+1}^2 &=4(-1)^n+5F_n^2+4(-1)^{n+1}+5F_{n+1}^2\\
     &=5(F_n^2+F_{n+1}^2)\\
     &=5F_{2n+1}
 \end{align*}

Hence, we need to find integer solutions to the equation
\[
5F_{2n+1} = x^2.
\]
Since $5 \mid x^2 \implies 5 \mid x$, we can write $x = 5k$ for some integer $k$. Substituting this into the equation, we obtain
\[
F_{2n+1} = 5k^2.
\]
Thus, we now need to find integer solutions to the equation $F_{2n+1} = 5k^2$. By Lemma (\ref{Fibonacci of the form 5x^2}), we deduce that $k=1, 2n+1=5$, i.e., $x=5k=5$ and $n=2$. In this case, $(n, \alpha, x)=(2,2,5)$ is the only solution to the Equation (\ref{main equation}).
 
\medskip 

\textbf{Case (iv):} Let $\alpha = 3$. Then, Equation (\ref{main equation}) simplifies to
\[L_n^3 + L_{n+1}^3 = x^2.\] 
By applying Lemma \ref{zhang}, we obtain either $L_n + L_{n+1} = x^2$ or $L_n + L_{n+1} = 3x^2$. Note that the case $L_n + L_{n+1} = x^2$ leads to the same result as in Case (ii). On the other hand, by Lemma \ref{Lucas of the form 3square}, the equation $L_n + L_{n+1} = L_{n+2} = 3x^2$ implies $n = 0$, which results in a contradiction.
\medskip 

\textbf{Case (v):} Let $\alpha \geq 4$. Since any two consecutive Lucas numbers are co-prime, by Theorem \ref{poonen}, the Diophantine equation $L_n^\alpha + L_{n+1}^\alpha= x^2$ possesses no solutions in terms of non-negative integers. This completes the proof.
\end{proof}









\end{document}